\newcommand{\np}{\medskip}
\numberwithin{equation}{section}
\renewcommand{\geq}{\geqslant}
\renewcommand{\leq}{\leqslant}
\newcommand{\Osh}{{\mathcal O}}                        %  Structure sheaf
\renewcommand{\H}{\mathrm{H}}                          %  Cohomology group
\newcommand{\Pic}{\operatorname{Pic}} %Pic
\newcommand{\PP}{\mathbf{P}} % projective space
\newcommand{\PP}{\mathbb{P}} % projective space
\newcommand{\remind}[1]{{\bf[#1]}}
\newcommand{\lremind}[1]{{\bf[label:  #1]}}
\newcommand{\bremind}[1]{{\bf[label:  #1]}}
\newcommand{\comment}[1]{{\bf [#1]}}
\newcommand{\remind}[1]{{}}
\newcommand{\lremind}[1]{{}}
\newcommand{\bremind}[1]{{}}
\newcommand{\comment}[1]{{}}
\newcommand{\hiddenproof}[1]{
\ifthenelse{\boolean{shproofs}}{
\medskip
\begin{centering}
\begin{minipage}{0.9\textwidth}
\hrule
\vspace{.25\baselineskip}
\small
#1
\vspace{0.25\baselineskip}
\hrule
\end{minipage}\\
\end{centering}
\medskip
}
{
}
}
\newtheorem{theorem}{Theorem}[section]
\newtheorem{lemma}[theorem]{Lemma}
\newtheorem{corollary}[theorem]{Corollary}
\newtheorem{proposition}[theorem]{Proposition}
\begin{document}
\title{Segre embeddings and the canonical image of a curve}

\author{Nathan Grieve}
\address{Department of Mathematics and Statistics, Queen's University,
Kingston, ON, K7L 3N6, Canada}
\email{nathangrieve@mast.queensu.ca}
\thanks{\emph{Mathematics Subject Classification (2010):} 14H10.}

\maketitle
\begin{abstract}
We prove that there is no $g$ for which the canonical embedding of a general curve of genus $g$ lies on the Segre embedding of any product of three or more projective spaces.
\end{abstract}
\section{Introduction}

If $g$ is composite then the canonical embedding of a general curve of genus $g$ lies on the Segre embedding of a product of two projective spaces.  For example, a general curve of genus $4$ lies on the Segre embedding of ${\PP}^1\times {\PP}^1$ while a general curve of genus $6$ lies on the Segre embedding of ${\PP}^1\times {\PP}^2$.  These facts have applications concerning the structure Chow ring of $\mathcal{M}_g$ as illustrated in \cite[p. 421]{Faber:chowII} and \cite[p. 26]{Penev}.  The aim of this note is to prove that, by contrast, there is no $g$ for which the canonical embedding of a general curve of genus $g$ lies on the Segre embedding of any product of three or more projective spaces.  

To prove our main result we first give the following criterion for a general curve to lie on some Segre embedding ${\PP}^{r_1}\times \dots \times {\PP}^{r_n} \rightarrow {\PP}^{g-1}$.

\begin{proposition}\label{aa} The canonical image of a general curve $C$ of genus $g$ lies on some Segre embedding ${\PP}^{r_1}\times \dots \times {\PP}^{r_n}\rightarrow {\PP}^{g-1}$ if and only if $C$ admits line bundles $L_1,\dots, L_n$ such that $\bigotimes\limits_{i=1}^n L_i=K_{C}$, $h^0(C,L_i)=r_i+1$ and $\prod\limits_{i=1}^n (r_i+1)=g$.
\end{proposition}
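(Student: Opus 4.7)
The plan is to treat the equivalence as a dictionary between the geometric condition that the canonical image lies on the Segre variety and the algebraic condition on the $L_i$, with the bridge being the multiplication map
\[
\mu \colon H^0(C, L_1) \otimes \cdots \otimes H^0(C, L_n) \to H^0(C, K_C).
\]
Under the constraint $\bigotimes_i L_i = K_C$, both directions reduce to showing $\mu$ is an isomorphism, which by the dimension count $\prod(r_i+1) = g = h^0(K_C)$ is equivalent to injectivity (or surjectivity).

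For the forward direction, suppose $\phi_{K_C}(C)$ lies on the Segre image $\Sigma \subset \PP^{g-1}$ of $\prod_i \PP^{r_i}$. Nondegeneracy of the canonical image forces the linear span of $\Sigma$ to be $\PP^{g-1}$, hence $\prod(r_i+1) = g$. Setting $L_i = p_i^* \Osh(1)|_C$ via $C \to \Sigma \cong \prod_i \PP^{r_i}$, the identity $\Osh_\Sigma(1) = \bigotimes_i p_i^* \Osh(1)$ gives $\bigotimes_i L_i = K_C$. If some projection $\phi_i \colon C \to \PP^{r_i}$ were degenerate, then $\phi_{K_C}(C) \subset \Sigma$ would lie in a proper linear subspace of $\PP^{g-1}$, contradicting nondegeneracy; hence $h^0(L_i) \geq r_i + 1$. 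The restriction $H^0(\PP^{g-1}, \Osh(1)) \to H^0(C, K_C)$ is an isomorphism and, via the Segre identification $H^0(\PP^{g-1}, \Osh(1)) = \bigotimes_i H^0(\PP^{r_i}, \Osh(1))$, factors through $\bigotimes_i H^0(L_i)$, so $\mu$ is surjective. Once injectivity of $\mu$ is in hand, the dimension count yields $\prod_i h^0(L_i) = g = \prod_i (r_i+1)$, and combined with $h^0(L_i) \geq r_i + 1$ this tightens to equality.

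For the reverse direction, given the $L_i$ as in the statement, form $\phi_i \colon C \to \PP H^0(C,L_i)^\vee = \PP^{r_i}$ from the complete linear systems, combine them into $\phi \colon C \to \prod_i \PP^{r_i}$, and compose with the Segre map into $\PP^{\prod(r_i+1)-1} = \PP^{g-1}$. The resulting map is defined by the image of $\mu$ inside $H^0(C,K_C)$; if $\mu$ is injective then by the dimension count it is an isomorphism, the image subspace is all of $H^0(C,K_C)$, and the map is therefore the complete canonical map, whose image lands on $\Sigma$.

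The technical heart is thus the injectivity of $\mu$ for a general curve of genus $g$ whenever $\bigotimes_i L_i = K_C$ and $\prod_i (r_i+1) = g$. For $n = 2$ this is precisely Gieseker's proof of the Petri conjecture applied to the pair $(L_1, K_C \otimes L_1^{-1})$. For $n \geq 3$, the plan is to iterate, factoring $\mu$ through the tower
\[
\bigotimes_{i=1}^n H^0(C,L_i) \to H^0(C,L_1 \otimes L_2) \otimes \bigotimes_{i \geq 3} H^0(C,L_i) \to \cdots \to H^0(C,K_C),
\]
reducing at each stage to injectivity of a two-factor multiplication, which can be treated by Petri in combination with the base-point-free pencil trick or a suitable extension. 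Propagating the general-$C$ hypothesis throughout the iteration is the part of the argument that requires the most care, and this is the step I expect to be the main obstacle.
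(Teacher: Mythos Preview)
Your outline matches the paper's proof almost exactly: both directions are reduced to showing that the multiplication map $\mu$ is an isomorphism, and the content is the injectivity of $\mu$ for all factorizations $\bigotimes_i L_i \cong K_C$ on a general curve. Two remarks sharpen the places where you are vague or worried.

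First, your concern about ``propagating the general-$C$ hypothesis'' through the induction is unfounded. The Gieseker--Petri theorem asserts that for a single general curve $C$, the Petri map $\mu(L)\colon H^0(C,L)\otimes H^0(C,K_C\otimes L^{-1})\to H^0(C,K_C)$ is injective for \emph{every} line bundle $L$ on $C$. So once $C$ is fixed general, you may apply Petri freely to any $L$ arising in the argument; no further genericity is consumed at each stage.

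Second, in your tower the intermediate two-factor map $H^0(C,L_1)\otimes H^0(C,L_2)\to H^0(C,L_1\otimes L_2)$ is \emph{not} itself a Petri map, since $L_1\otimes L_2\neq K_C$, so invoking Petri directly (or the base-point-free pencil trick) does not immediately apply. The paper's device is to choose nonzero sections $\sigma_i\in H^0(C,L_i)$ for $i\geq 3$, set $\sigma=\sigma_3\cdots\sigma_n$, and observe that multiplication by $\sigma$ gives injections $H^0(C,L_2)\hookrightarrow H^0(C,L_2\otimes\cdots\otimes L_n)$ and $H^0(C,L_1\otimes L_2)\hookrightarrow H^0(C,K_C)$. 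This embeds the map you want into the genuine Petri map $\mu(L_1)$, which is injective by Gieseker--Petri. With this trick in hand, the induction goes through cleanly and your proof is complete.
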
 
We then prove the following stronger result of independent interest.

\begin{theorem}\label{main:prop} Let $C$ be a general curve of genus $g$ and let $n\geq 3$.  If $C$ admits line bundles $L_1,\dots, L_n$ such that $\sum\limits_{i=1}^n \deg L_i=2g-2$ and $h^0(C,L_i)=r_i+1\geq 2$ for $i=1,\dots, n$,  then $n=3$ and 
$$\prod_{i=1}^3(r_i+1)<\biggl(\prod_{i=1}^3(r_i+1)\biggr)\left(\frac{r_1+r_2+r_3+2}{r_1+r_2+r_3+2-r_1r_2r_3}\right)\leq g.$$
Moreover, up to permutation of the $r_i$, we have
\begin{enumerate}
\item{$r_1=r_2=1$ and $1\leq r_3\leq \frac{g}{4}-2$}
or
\item{$r_1=1$, $r_2=2$ and $r_3=2,3,$ or $4$.}
\end{enumerate}
\end{theorem}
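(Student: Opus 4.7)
The plan is to invoke the Brill--Noether theorem for a general curve: $W^r_d(C)$ is nonempty if and only if the Brill--Noether number $\rho(g,r,d) = g - (r+1)(g-d+r)$ is non-negative.  Thus each given $L_i$, of degree $d_i$, must obey $d_i \geq r_i + g\cdot \frac{r_i}{r_i+1}$.  Summing these bounds and using $\sum_i d_i = 2g-2$ gives, after rearrangement, the master inequality
$$g\Bigl(\sum_{i=1}^n \tfrac{1}{r_i+1}-(n-2)\Bigr) \;\geq\; \sum_{i=1}^n r_i + 2. \quad (\star)$$

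To exclude $n \geq 4$ observe that $\tfrac{1}{r_i+1}\leq \tfrac12$, so the left side of $(\star)$ is at most $g(2-n/2)\leq 0$ while the right side is $\geq n+2 > 0$, a contradiction.  Hence $n = 3$, and a direct expansion yields
$$\sum_{i=1}^3 \tfrac{1}{r_i+1} - 1 \;=\; \frac{r_1+r_2+r_3+2-r_1r_2r_3}{\prod_{i=1}^3(r_i+1)},$$
which converts $(\star)$ precisely into the displayed inequality of the theorem.  The strict left-hand inequality there is equivalent to $r_1r_2r_3\geq 1$, which is automatic.

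For the classification, $(\star)$ forces $\sum\tfrac{1}{r_i+1}>1$ (otherwise its left side would be non-positive).  Ordering $r_1\leq r_2\leq r_3$, the case $r_1\geq 2$ gives $\sum \leq 1$, and the case $r_1=1$, $r_2 \geq 3$ gives $\sum \leq \tfrac12+\tfrac14+\tfrac14 = 1$; both are impossible.  So either $r_1=r_2=1$ --- case (a), where the main inequality specializes to $(r_3+1)(r_3+4)\leq g$, and since $(r_3+1)(r_3+4)\geq 4(r_3+2)$ for $r_3\geq 2$ this yields $r_3\leq g/4-2$ --- or $r_1=1$, $r_2=2$, in which case $\tfrac{1}{r_3+1}>\tfrac16$ forces $r_3\leq 4$; combined with $r_3\geq r_2 = 2$ this gives case (b).

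The main obstacle is computational rather than conceptual: the Brill--Noether existence statement is classical and does all the real geometric work, after which the proof reduces to carefully combining the per-bundle bounds and performing the easily-botched algebraic simplification that extracts the single product-form inequality on $g$, followed by a routine case analysis on the $r_i$.
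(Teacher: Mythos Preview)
Your proof is essentially the paper's: both use $\rho(g,r_i,d_i)\geq 0$ to bound each $d_i$ from below, sum over $i$, and rearrange into exactly your inequality $(\star)$, then run the same case analysis on $\sum\frac{1}{r_i+1}>1$; the paper merely packages the summation step as a standalone proposition valid for an arbitrary total degree $d$ before specializing to $d=2g-2$. The only cosmetic difference is in case~(a), where the paper reads off the bound on $r_3$ from the cruder consequence $4(r_3+1)<g$ rather than from your sharper $(r_3+1)(r_3+4)\leq g$.
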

A more conceptual interpretation of the theorem is the following corollary.

\begin{corollary}\label{main:theorem}
Let $C$ be a general curve of genus $g$.  
\begin{enumerate}
\item{If $g$ is composite then the canonical image of $C$ lies on some Segre embedding $${\PP}^{r_1}\times {\PP}^{r_2}\rightarrow  {\PP}^{g-1}.$$ }
\item{If $n\geq 3$ then the canonical image of $C$ does not lie on any Segre embedding $${\PP}^{r_1}\times \dots \times {\PP}^{r_n}\rightarrow {\PP}^{g-1}.$$}
\end{enumerate}
\end{corollary}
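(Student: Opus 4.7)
The plan is to derive both parts of the corollary directly from Proposition~\ref{aa} and Theorem~\ref{main:prop}; once the work done by the latter is in hand, the corollary reduces to essentially a bookkeeping exercise.

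For part~(1), given composite $g = ab$ with $a, b \geq 2$, I would set $r_1 = a - 1$ and $r_2 = b - 1$ so that $(r_1+1)(r_2+1) = g$. By Proposition~\ref{aa} it then suffices to produce, on a general curve $C$ of genus $g$, a line bundle $L_1$ of suitable degree with $h^0(L_1) = a$: once $\deg L_1 = a - b + g - 1$, Riemann-Roch combined with Serre duality automatically forces $h^0(K_C \otimes L_1^{-1}) = b$. The existence of such $L_1$ is supplied by the Brill-Noether theorem: a short computation gives $\rho(g, a-1, a-b+g-1) = 0$, while the analogous $\rho$ with $r = a$ is negative, so on a general $C$ the variety $W^{a-1}_{a-b+g-1}(C)$ is nonempty and every one of its points has $h^0$ exactly equal to $a$.

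For part~(2), I would argue by contradiction. Suppose the canonical image of $C$ lies on some Segre embedding $\PP^{r_1} \times \cdots \times \PP^{r_n} \to \PP^{g-1}$ with $n \geq 3$; without loss of generality each $r_i \geq 1$, since a $\PP^0$-factor would collapse. By Proposition~\ref{aa} there exist line bundles $L_1, \dots, L_n$ on $C$ with $\bigotimes_i L_i = K_C$, $h^0(L_i) = r_i + 1 \geq 2$, and $\prod_i (r_i+1) = g$. Taking degrees gives $\sum_i \deg L_i = 2g - 2$, so these line bundles fit the hypotheses of Theorem~\ref{main:prop}; the theorem then forces the strict inequality $\prod_i (r_i+1) < g$, contradicting the equality already obtained from Proposition~\ref{aa}.

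The main obstacle has been absorbed into the earlier Theorem~\ref{main:prop}; the only step in the corollary itself demanding any nontrivial input is the appeal to Brill-Noether existence in part~(1).
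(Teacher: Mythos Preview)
Your proposal is correct and follows essentially the same approach as the paper: for part~(a) both you and the paper pick $d_1$ so that $\rho(g,r_1,d_1)=0$ (your $a-b+g-1$ equals the paper's $r_1r_2+2r_1$), invoke Brill--Noether existence together with $\rho(g,r_1+1,d_1)<0$ to pin down $h^0(L_1)$, then use Riemann--Roch for $L_2=K_C\otimes L_1^{-1}$ and conclude via Proposition~\ref{aa}; for part~(b) both arguments combine Proposition~\ref{aa}'s equality $\prod(r_i+1)=g$ with Theorem~\ref{main:prop}'s strict inequality to reach a contradiction. Your explicit reduction to $r_i\geq 1$ is a small clarification the paper leaves implicit.
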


Statement (a) in Corollary \ref{main:theorem} is well-known to experts and follows from Brill-Noether theory. We include a proof for completeness.  We refer to \cite[Chap. IV]{ACGH} and \cite[Chap. XXI]{ACG} for the basic results in Brill-Noether theory and \cite[p. xv]{ACGH} for notation.  For the non-expert we give a quick exposition of some of the main results of the theory in Section \ref{background}.  Throughout we assume that $g$ is an integer greater than or equal to $3$.

\subsection*{Acknowledgments}
I would like to thank my PhD advisor Mike Roth for several useful discussions, for encouraging me to write this note and for reading a preliminary draft.  I also benifited from conversations with Greg Smith and was partially supported by an Ontario Graduate Scholarship. 

\section{Brill-Noether theory}\label{background}
\np
For all smooth curves $C$ of genus $g$ there exist moduli schemes $W^r_d(C)$ whose closed points consist of equivalence classes of degree $d$ line bundles on $C$ with at least $r+1$ global sections.  Explicitly the closed points of $W^r_d(C)$  are identified with the set 
$$\{L \mid L\in \Pic(C), \deg L=d \textrm{ and } h^0(C,L)\geq r+1\}. $$  For the construction of $W^r_d(C)$ see \cite[p. 279]{Fulton:Lazarsfeld:1981} or \cite[p. 176]{ACGH}.

\np
Brill-Noether theory studies the geometry of such $W^r_d(C)$.  Central to the theory is the Brill-Noether number and the Petri-map.  The Brill-Noether number is defined by
$$ \rho(g,r,d):=g-(r+1)(g+r-d).$$
See \cite[p. 159]{ACGH} for an explanation as to how this number arises.  On the other hand, the Petri-map is defined for all line bundles $L$ on $C$.  If $K_C$ denotes the canonical bundle of $C$ then the Petri-map is the cup-product
$$ \mu(L): \H^0(C,L)\otimes \H^0(C,L^{\vee}\otimes K_C)\rightarrow \H^0(C,K_C).$$

\np
Both the Brill-Noether number and the Petri-map have remarkable geometric implications as seen in the two main theorems of Brill-Noether theory which we now describe.  The first theorem applies to all smooth curves and is the result of work by Kempf, Kleiman-Laksov and Fulton-Lazarsfeld.  

\begin{theorem}[Brill-Noether theory theorem I \cite{Kleiman:Laksov:1972}, \cite{Fulton:Lazarsfeld:1981}]\label{theoremA}
Let $C$ be a smooth curve of genus $g$.  
\begin{enumerate} 
\item{If $\rho(g,r,d)\geq 0$ then $W^r_d(C)\not = \varnothing$ and every irreducible component of $W^r_d(C)$ has dimension greater than or equal to $\rho(g,r,d)$.}
\item{If $\rho(g,r,d)\geq 1$ then $W^r_d(C)$ is connected.}
\end{enumerate}
\end{theorem}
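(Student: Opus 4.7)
The plan is to realize $W^r_d(C)$ as a degeneracy locus of a morphism of vector bundles on $\Pic^d(C)$, and then to deduce (a) from the Thom-Porteous formula together with the standard determinantal dimension bound, and (b) from the Fulton-Lazarsfeld connectedness theorem for degeneracy loci of ample vector bundles.

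First I would fix an effective divisor $D$ on $C$ of large degree $m$, chosen so that $\H^1(C, L(D)) = 0$ for every $L \in \Pic^d(C)$. Letting $\cP$ be a Poincar\'e line bundle on $C \times \Pic^d(C)$ and $\pi : C \times \Pic^d(C) \to \Pic^d(C)$ the projection, pushing forward the short exact sequence
$$ 0 \to \cP \to \cP(\pi^* D) \to \cP(\pi^* D)|_{D \times \Pic^d(C)} \to 0 $$
produces a morphism of locally free sheaves $\alpha : \Esh \to \Fsh$ on $\Pic^d(C)$ of ranks $d+m-g+1$ and $m$, whose fiber over $L$ is the restriction map $\H^0(C, L(D)) \to \H^0(D, L(D)|_D)$. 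Since the kernel of $\alpha_L$ is $\H^0(C, L)$, the locus $W^r_d(C)$ coincides with the degeneracy locus where $\rank \alpha \leq d+m-g-r$.

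With this presentation in hand, the standard determinantal bound forces every irreducible component of $W^r_d(C)$ to have codimension at most $(r+1)(g-d+r)$ in $\Pic^d(C)$, i.e., dimension at least $\rho(g,r,d)$. To complete (a) I would compute the Thom-Porteous Chern-class expression for the locus in the cohomology of $\Pic^d(C)$, and verify that, under $\rho(g,r,d) \geq 0$, it is a positive multiple of a power of the theta divisor on the Jacobian; the nonvanishing of this class then forces $W^r_d(C)$ to be nonempty.

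For (b) the plan is to apply the Fulton-Lazarsfeld connectedness theorem: if $\Esh^\vee \otimes \Fsh$ is ample on $\Pic^d(C)$ and $\rho(g,r,d) \geq 1$, then the degeneracy locus $W^r_d(C)$ is connected. The hard part, in my view, is arranging this ampleness hypothesis, since the naive construction does not directly produce an ample $\Esh^\vee \otimes \Fsh$; one has to twist $\cP$ by a suitable line bundle pulled back from $\Pic^d(C)$ and then exploit positivity of the theta divisor on the Jacobian to verify positivity of the twisted bundle. Once ampleness is secured, both parts of the theorem follow from the cited intersection-theoretic inputs.
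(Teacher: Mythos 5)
The paper does not prove this theorem at all: it is quoted as background (``Brill--Noether theory theorem I'') with attributions to Kempf, Kleiman--Laksov and Fulton--Lazarsfeld, so there is no internal proof to compare against. Your outline is, in substance, the standard argument from exactly those references: the presentation of $W^r_d(C)$ as the locus where the evaluation map $\alpha\colon\pi_*\cP(\pi^*D)\to\pi_*\bigl(\cP(\pi^*D)|_{D\times\Pic^d(C)}\bigr)$ drops rank is correct (with the ranks $d+m-g+1$ and $m$ as you state, and expected codimension $(r+1)(g-d+r)$), the local determinantal bound gives the dimension estimate, and the Fulton--Lazarsfeld connectedness theorem for degeneracy loci of ample $\Hom$-bundles gives (b). Two points deserve more care than your sketch gives them. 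First, for nonemptiness you cannot simply ``compute the Thom--Porteous expression and observe it is nonzero,'' because the naive Thom--Porteous formula computes the class of the degeneracy locus only when that locus has the expected codimension; what you need is the refined statement (Kempf--Laksov, or Fulton's Theorem 14.4) that there is a class of the expected dimension \emph{supported on} $W^r_d(C)$ whose image in the Chow or cohomology ring of $\Pic^d(C)$ equals the determinantal expression $\bigl(\prod_{i=0}^{r} i!/(g-d+r+i)!\bigr)\,\theta^{(r+1)(g-d+r)}$; since $\theta^j\neq 0$ for $j\leq g$ and $\rho\geq 0$ is exactly $(r+1)(g-d+r)\leq g$, emptiness would force this nonzero class to vanish, a contradiction. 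Second, as you correctly flag, the ampleness of $\Esh^\vee\otimes\Fsh$ needed for (b) is genuinely the hard analytic/positivity input of \cite{Fulton:Lazarsfeld:1981}; it requires normalizing the Poincar\'e bundle and a nontrivial verification, not just ``twisting by something positive.'' With those two caveats filled in from the cited sources, your proposal is the correct and standard proof.
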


If $C$ is a general curve then the converse to Theorem \ref{theoremA} (a) holds -- we state this in the second main theorem (Theorem \ref{theoremB}).  That the converse to Theorem \ref{theoremA} (a) holds for general curves was first proved by Griffiths-Harris \cite{Griffiths:Harris:1980} using a degeneration argument.

Petri conjectured that $\mu(L)$ is injective for all line bundles $L$ on a general curve $C$. (This is implicit in \cite{Petri}.  See the footnote on \cite[p. 215]{ACGH} for a discussion.)  Arbarello-Cornalba clarified the geometric implications of this conjecture.

\begin{theorem}[Arbarello-Cornalba theorem {\cite[Theorem 0.3]{Arbarello:Cornalba:1981}}]
Let $C$ be a general curve of genus $g$.  If $\mu(L)$ is injective for all line bundles $L$ on $C$ and $W^r_d(C)\not = \varnothing$ then every irreducible component of $W^r_d(C)$ has dimension $\rho(g,r,d)$ and $W^r_d(C)$ is non-singular away from $W^{r+1}_d(C)$.  
\end{theorem}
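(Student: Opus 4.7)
The plan is to compute the Zariski tangent space to $W^r_d(C)$ at a point $L_0 \in W^r_d(C) \setminus W^{r+1}_d(C)$ and identify it, via Serre duality, with the annihilator of the image of the Petri map $\mu(L_0)$. First I would realize $W^r_d(C)$ locally near $L_0$ as a determinantal subscheme of $\Pic^d(C)$: fix an effective divisor $E$ on $C$ of sufficiently large degree so that $\H^1(C, L(E)) = 0$ for every $L$ in a Zariski neighborhood $U$ of $L_0$, and consider the morphism $\varphi : \Esh \to \Fsh$ of vector bundles on $U$ whose fiber over $L$ is the evaluation map $\H^0(C, L(E)) \to \H^0(C, L(E)|_E)$. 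By construction $W^r_d(C) \cap U$ is the locus where $\rank \varphi \leq \rank \Esh - r - 1$; see \cite[IV.3]{ACGH}.

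Standard determinantal theory then identifies
\[
T_{L_0} W^r_d(C) \;=\; \ker\!\Bigl( T_{L_0}\Pic^d(C) \longrightarrow \Hom\bigl(\ker\varphi_{L_0},\, \operatorname{coker}\varphi_{L_0}\bigr) \Bigr),
\]
where the arrow is the derivative of $\varphi$ (a section of $\Hom(\Esh,\Fsh)$ on $U$), restricted to $\ker\varphi_{L_0}$ and projected onto $\operatorname{coker}\varphi_{L_0}$. Since $h^0(L_0) = r+1$, the four-term exact sequence associated to $0 \to L_0 \to L_0(E) \to L_0(E)|_E \to 0$ identifies $\ker\varphi_{L_0} = \H^0(C,L_0)$ and $\operatorname{coker}\varphi_{L_0} = \H^1(C,L_0)$. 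Under $T_{L_0}\Pic^d(C) \cong \H^1(C, \Osh_C)$ and Serre duality $\H^1(C,L_0) \cong \H^0(C, K_C \otimes L_0^\vee)^\vee$ together with $\H^1(C,\Osh_C) \cong \H^0(C,K_C)^\vee$, a Kodaira-Spencer/cup-product computation shows that the displayed arrow is precisely the transpose of the Petri map $\mu(L_0)$. Hence $\dim T_{L_0} W^r_d(C) = g - \rank \mu(L_0)$, and if $\mu(L_0)$ is injective this equals $g - (r+1)(g-d+r) = \rho(g,r,d)$.

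Combining this upper bound on the tangent dimension with the Kempf/Kleiman-Laksov/Fulton-Lazarsfeld lower bound of Theorem \ref{theoremA}(a), every irreducible component of $W^r_d(C)$ meeting the open set $W^r_d(C) \setminus W^{r+1}_d(C)$ has dimension exactly $\rho(g,r,d)$ and is smooth at such points. To rule out components contained entirely in $W^{r+1}_d(C)$, I would apply the same tangent space bound at a point $L \in W^{r+1}_d(C)$ with $h^0(L) = r'+1 \geq r+2$: this yields $\dim_L W^r_d(C) \leq g - (r'+1)(g-d+r') \leq \rho(g, r+1, d) < \rho(g,r,d)$, contradicting the lower bound.

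The main technical obstacle is the identification of the derivative of $\varphi$ with the transpose of $\mu(L_0)$. This requires carefully unwinding the fact that first-order deformations of the line bundle $L_0$ act on sections by cup-product with the Kodaira-Spencer class in $\H^1(C,\Osh_C)$, and then tracing through the Serre-duality pairings so that the resulting map $\H^0(C,L_0) \otimes \H^1(C,\Osh_C) \to \H^1(C,L_0)$ is dual to the multiplication map $\H^0(C,L_0) \otimes \H^0(C, K_C \otimes L_0^\vee) \to \H^0(C,K_C)$, which is the Petri map.
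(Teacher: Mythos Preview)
The paper does not give a proof of this statement at all: it is quoted as background (with attribution to \cite{Arbarello:Cornalba:1981}) in the expository Section~\ref{background}, so there is no ``paper's own proof'' to compare against. Your outline is the standard argument, essentially the one in \cite[IV, Prop.~4.2 and Lemma~1.5]{ACGH}, and the identification of the derivative of $\varphi$ with the transpose of $\mu(L_0)$ via Kodaira--Spencer and Serre duality is exactly the right mechanism.

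One imprecision worth flagging: in your final paragraph, the tangent-space bound you invoke at a point $L$ with $h^0(L)=r'+1>r+1$ does \emph{not} bound $\dim T_L W^r_d(C)$; at such a point the scheme $W^r_d(C)$ is typically singular and its tangent space can be strictly larger than $\rho(g,r',d)$. What the computation actually bounds is $\dim T_L W^{r'}_d(C)=\rho(g,r',d)$. The way to finish is: choose $L$ generic on the hypothetical bad component $W'\subseteq W^{r+1}_d(C)$, so that $h^0(L)=r'+1$ exactly; then $W'\subseteq W^{r'}_d(C)$, and the tangent-space calculation applied to $W^{r'}_d(C)$ (where it is legitimate, since $L\notin W^{r'+1}_d(C)$) gives $\dim W'\leq\rho(g,r',d)<\rho(g,r,d)$, contradicting Theorem~\ref{theoremA}(a). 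With that correction your sketch is sound.
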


In \cite{Gieseker:1982} Gieseker used a degeneration argument, and which was subsequently streamlined by Eisenbud-Harris \cite{Eisenbud:Harris:1983}, to prove that $\mu(L)$ is injective for all line bundles on a general curve.  Lazarsfeld, without using degenerations, also gave an independent proof  \cite[p. 299]{Lazarsfeld:1986}. The fact that $\mu(L)$ is injective for all line bundles on a general curve is sometimes referred to as the  Gieseker-Petri theorem.

\begin{theorem}[Gieseker-Petri theorem {\cite[Theorem 1.1, p. 251]{Gieseker:1982}}]\label{petri:conj}
If $C$ is a general curve of genus $g$ then the cup-product $\mu(L)$ is injective for all line bundles $L$ on $C$.
\end{theorem}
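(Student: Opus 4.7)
The plan is to follow Lazarsfeld's K3 surface approach, which has the virtue (compared to the degeneration arguments of Gieseker and Eisenbud-Harris) of producing a single explicit Petri-general curve in each genus. The first reduction is that injectivity of $\mu(L)$ for every line bundle $L$ is an open condition on $\Msh_g$: for each pair $(r,d)$ the dimension of $\ker\mu$ is upper-semicontinuous on the relative Picard scheme over $\Msh_g$, and only finitely many such $(r,d)$ are relevant for fixed $g$. Hence it suffices to exhibit one smooth curve $C$ of genus $g$ on which $\mu(L)$ is injective for every line bundle $L$.

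To produce such a curve, take a smooth projective K3 surface $S$ with $\Pic(S)=\ZZ\cdot [H]$ and $H^2 = 2g-2$, and let $\iota\colon C\hookrightarrow S$ be any smooth member of $|H|$. Given a line bundle $L$ on $C$ with $h^0(C,L)=r+1\geq 2$ and $h^0(C,K_C\otimes L^{\vee})=s+1\geq 2$ (otherwise $\mu(L)$ is trivially injective), form the Lazarsfeld-Mukai bundle $E=E_{C,L}$ as the dual of the kernel of the evaluation map $H^0(C,L)\otimes\Osh_S\twoheadrightarrow \iota_*L$. A diagram chase using the defining sequence of $E$, Serre duality on $S$, and the triviality of $\omega_S$ yields the key identification
$$
\Hom_S(E,E)\;\cong\;\CC\,\oplus\,\ker\bigl(\mu(L)\bigr)^{\vee},
$$
so injectivity of the Petri map becomes equivalent to the simplicity of $E$.

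Simplicity of $E$ is then established by showing that $E$ is Gieseker-stable with respect to $H$. Any destabilizing subsheaf has first Chern class of the form $kH$ by the Picard rank one hypothesis, and a slope comparison together with the constraints $\det E=\Osh_S(H)$ and $c_2(E)=\deg L$ rules out all nontrivial values of $k$. Stable sheaves are simple, so $\mu(L)$ is injective for every $L$ on this particular $C$, and the semicontinuity argument above promotes this to Petri injectivity for a general curve of genus $g$. The main obstacle is precisely the stability step, which must handle destabilizing subsheaves of every rank; the standard route is an induction on rank combined with Bogomolov's inequality for semistable sheaves and with the numerical restrictions encoded by the Mukai vector of $E$. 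A secondary technical point is making the isomorphism displayed above fully rigorous by tracking the long exact sequences obtained from tensoring the defining sequence of $E$ with $E^{\vee}$ and identifying the connecting map with $\mu(L)$ via adjunction along $\iota$.
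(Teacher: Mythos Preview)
The paper does not prove this theorem at all: Theorem~\ref{petri:conj} is stated as background and attributed to Gieseker (with the alternative proofs of Eisenbud--Harris and Lazarsfeld mentioned in the surrounding discussion), and is then used as a black box in the proof of Lemma~\ref{n:petri}. There is therefore no ``paper's own proof'' to compare your attempt against.

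That said, your sketch of Lazarsfeld's argument has two substantive inaccuracies worth flagging. First, the displayed identification $\Hom_S(E,E)\cong\CC\oplus\ker(\mu(L))^{\vee}$ is not what Lazarsfeld proves, and I do not believe it holds as a clean isomorphism for an arbitrary pair $(C,L)$; what Lazarsfeld actually establishes is a deformation-theoretic inequality showing that if the \emph{general} curve in $|H|$ fails Petri, then the associated bundle $E$ (for a generic such pair) is not simple. In particular his argument does not show that \emph{every} smooth $C\in|H|$ is Petri-general, only the general one, so ``let $C$ be any smooth member of $|H|$'' overstates the conclusion. Second, Lazarsfeld does not prove that $E$ is Gieseker-stable; rather, he shows that non-simplicity of $E$ forces a non-integral member of $|C_0|$, contradicting the Picard-rank-one hypothesis. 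Your proposed slope argument (``rules out all nontrivial values of $k$'') is too quick: a saturated subsheaf of rank $s<r+1$ with $c_1=H$ would have strictly larger slope, and nothing you wrote excludes this possibility. So while the overall architecture---semicontinuity reduction, K3 with $\Pic(S)=\ZZ H$, Lazarsfeld--Mukai bundle---is correct, the two key steps as you have written them do not go through without substantial further work, and neither matches Lazarsfeld's actual route.
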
  

The above discussion, combined with \cite[Corollary 2.4, p. 280]{Fulton:Lazarsfeld:1981}, is summarized in the second main theorem of Brill-Noether theory.  

\begin{theorem}[Brill-Noether theory theorem II \cite{Arbarello:Cornalba:1981}, \cite{Fulton:Lazarsfeld:1981}, \cite{Gieseker:1982},\cite{Griffiths:Harris:1980},  \cite{Lazarsfeld:1986}]\label{theoremB}
Let $C$ be a general curve of genus $g$.
\begin{enumerate}
\item{If $W^r_d(C)\not = \varnothing$ then $\rho(g,r,d)\geq 0$, $W^r_d(C)$ is of pure dimension $\rho(g,r,d)$ and $W^r_d(C)$ is non-singular away from $W^{r+1}_d(C)$.}
\item{If $\rho(g,r,d)\geq 1$ then $W^r_d(C)$ is irreducible.}
\end{enumerate}
\end{theorem}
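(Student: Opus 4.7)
The plan is to assemble Theorem~\ref{theoremB} directly from the ingredients collected in Section~\ref{background}; no essentially new argument is required, only careful bookkeeping of the deep inputs already stated.

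For part~(a): assume $W^r_d(C)\neq\varnothing$. The non-emptiness forces $\rho(g,r,d)\geq 0$ because, for a general $C$, this is the converse to Theorem~\ref{theoremA}(a), established by Griffiths-Harris~\cite{Griffiths:Harris:1980} via a degeneration argument. Theorem~\ref{theoremA}(a) then gives that every irreducible component of $W^r_d(C)$ has dimension at least $\rho(g,r,d)$. To upgrade this bound to equality and to obtain the non-singularity assertion, I would invoke Theorem~\ref{petri:conj} — the Gieseker-Petri theorem — which supplies the injectivity of $\mu(L)$ for every line bundle $L$ on $C$, and feed the output into the Arbarello-Cornalba theorem stated just above~\ref{petri:conj}. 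The conclusion is that each component of $W^r_d(C)$ has dimension exactly $\rho(g,r,d)$ and that $W^r_d(C)$ is non-singular away from $W^{r+1}_d(C)$.

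For part~(b): assume $\rho(g,r,d)\geq 1$. Theorem~\ref{theoremA}(b) supplies connectedness of $W^r_d(C)$, while part~(a), just established, supplies pure-dimensionality together with smoothness off $W^{r+1}_d(C)$. A short computation yields
\[
\rho(g,r,d)-\rho(g,r+1,d)=(g-d+r)+(r+2),
\]
which is positive in the relevant Brill-Noether range, so $W^{r+1}_d(C)$ is a proper closed subscheme of strictly smaller dimension than $W^r_d(C)$. A connected, pure-dimensional scheme which is smooth on the complement of a subscheme of strictly smaller dimension must be irreducible, since its smooth locus is then a dense, open, locally irreducible subscheme and hence is forced into the closure of a single component. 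This final packaging step is precisely \cite[Corollary~2.4, p.~280]{Fulton:Lazarsfeld:1981}.

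The main obstacle is not internal to the sketch above: once the deep inputs are granted, the theorem is a formal consequence. The real difficulty lies in any one of those inputs — the Griffiths-Harris degeneration upper bound on $\dim W^r_d(C)$, Gieseker's proof of Petri's conjecture, and the Arbarello-Cornalba smoothness result — each of which requires substantial work. A genuinely self-contained treatment would need to reproduce Gieseker's argument, the Eisenbud-Harris streamlining~\cite{Eisenbud:Harris:1983}, or Lazarsfeld's K3-surface proof~\cite{Lazarsfeld:1986}; for the purposes of this note they can safely be cited as black boxes.
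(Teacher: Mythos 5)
Your assembly is essentially identical to what the paper does: the paper offers no proof of Theorem~\ref{theoremB} beyond the remark that it ``summarizes'' the preceding discussion --- Griffiths--Harris~\cite{Griffiths:Harris:1980} for the converse to Theorem~\ref{theoremA}(a), the Gieseker--Petri theorem fed into the Arbarello--Cornalba theorem for pure dimension $\rho(g,r,d)$ and smoothness off $W^{r+1}_d(C)$ --- combined with \cite[Corollary 2.4, p.~280]{Fulton:Lazarsfeld:1981} for irreducibility, which is exactly your decomposition. One caution: your standalone heuristic for the irreducibility step is false as a general statement (two planes in $\mathbf{A}^4$ meeting at a point form a connected, pure-dimensional scheme that is smooth away from a set of strictly smaller dimension, yet is reducible); the cited corollary genuinely needs that $W^r_d(C)$ is locally a degeneracy locus, hence Cohen--Macaulay, so that $R_1+S_2$ yields normality and connected plus normal forces irreducibility --- but since you ultimately defer this step to the citation, just as the paper does, the assembly stands.
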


The results of this note depend on Theorem \ref{theoremA} (a), Theorem \ref{petri:conj}, and Theorem \ref{theoremB} (a).

\section{The $n$-fold Petri-map and the proof of Proposition \ref{aa}}

Proposition \ref{aa} relies on the following lemma which implies that, for a general curve, the $n$-fold Petri-map is injective.   
\begin{lemma}\label{n:petri}
Let $C$ be a general curve of genus $g$.  For all line bundles $L_1,\dots, L_n$ on $C$ such that $\bigotimes\limits_{i=1}^n L_i\cong K_C$ the cup-product
$ \bigotimes\limits_{i=1}^n \H^0(C,L_i)\rightarrow \H^0(C,K_C)$ is injective.
\end{lemma}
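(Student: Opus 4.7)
The plan is to induct on $n$, with base case $n=2$ given directly by the Gieseker-Petri theorem (Theorem~\ref{petri:conj}): the constraint $L_1 \otimes L_2 \cong K_C$ identifies the cup-product $\H^0(C,L_1) \otimes \H^0(C,L_2) \to \H^0(C,K_C)$ with the Petri map $\mu(L_1)$, which is injective.

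For the inductive step, I would group the last two factors. Given $L_1, \dots, L_n$ with $\bigotimes L_i \cong K_C$ (and each $h^0(C,L_i) \geq 1$, else the domain vanishes), factor the $n$-fold cup-product as
$$\bigotimes_{i=1}^n \H^0(C,L_i) \xrightarrow{\mathrm{id} \otimes \mu} \biggl(\bigotimes_{i=1}^{n-2} \H^0(C,L_i)\biggr) \otimes \H^0(C, L_{n-1} \otimes L_n) \xrightarrow{\Phi} \H^0(C, K_C),$$
where $\mu : \H^0(C,L_{n-1}) \otimes \H^0(C,L_n) \to \H^0(C, L_{n-1} \otimes L_n)$ is pairwise multiplication and $\Phi$ is the $(n-1)$-fold cup-product for the line bundles $L_1, \dots, L_{n-2}, L_{n-1} \otimes L_n$, whose tensor product is still $K_C$. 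By the inductive hypothesis $\Phi$ is injective, so it suffices to prove that $\mu$ is injective.

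The delicate step is the pairwise case: $\H^0(C,L) \otimes \H^0(C,M) \to \H^0(C, L \otimes M)$ is not injective in general (for instance $\H^0(C,K_C)^{\otimes 2} \to \H^0(C,2K_C)$ fails already for $g \geq 4$ by a dimension count), so the argument must use the hypothesis $\bigotimes L_i \cong K_C$. Products of nonzero sections of $L_1, \dots, L_{n-2}$ furnish a nonzero $s \in \H^0(C, K_C \otimes L_{n-1}^{\vee} \otimes L_n^{\vee})$, and multiplication by $s$ gives an injection $\H^0(C, L_{n-1} \otimes L_n) \hookrightarrow \H^0(C, K_C)$. Composing this embedding with $\mu$ yields the same map as
$$\H^0(C,L_{n-1}) \otimes \H^0(C,L_n) \xrightarrow{\mathrm{id} \otimes \cdot s} \H^0(C,L_{n-1}) \otimes \H^0(C, K_C \otimes L_{n-1}^{\vee}) \xrightarrow{\mu(L_{n-1})} \H^0(C, K_C),$$
and both arrows are injective --- the first because $s \neq 0$, the second by Gieseker-Petri applied to $L_{n-1}$. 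This forces $\mu$ to be injective, completing the induction.

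The main obstacle I anticipate is resisting the temptation to strengthen the inductive hypothesis by dropping the constraint $\bigotimes L_i \cong K_C$: as the $K_C^{\otimes 2}$ example shows, the unconstrained pairwise statement is false. The design of the argument is precisely to feed the constraint back in as the auxiliary section $s$ at the one point where a naive induction would break, thereby reducing the pairwise multiplication to the classical Petri map.
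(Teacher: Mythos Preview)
Your proposal is correct and follows essentially the same approach as the paper: induct on $n$ with the Gieseker--Petri theorem as base case, factor the $n$-fold cup-product by grouping two adjacent factors, apply the inductive hypothesis to the resulting $(n-1)$-fold map, and handle the pairwise multiplication by using a product of sections of the remaining bundles to embed into $\H^0(C,K_C)$ and thereby reduce to the classical Petri map. The only difference is cosmetic---you group the last two factors whereas the paper groups the first two.
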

\proof \np The case $n=1$ is trivial.  The case $n=2$ is the Gieseker-Petri theorem (Theorem \ref{petri:conj}).  Let $n\geq 3$.  Without loss of generality we may assume that $\H^0(C,L_i)\not = 0$, $i=1,\dots n$.

The cup-product factors
$$ \xymatrix{ \H^0(C,L_1)\otimes \dots \otimes\H^0(C,L_n)\ar[r] \ar[d] & \H^0(C,K_C) \\ \H^0(C,L_1\otimes L_2)\otimes \H^0(C,L_3)\otimes \cdots \otimes \H^0(C,L_n)\ar[ru] &}. $$  By induction the diagonal arrow is injective. It thus suffices to show that the downward arrow is injective.  For this we reduce to showing that the cup-product
$$ \Phi: \H^0(C,L_1)\otimes \H^0(C,L_2)\rightarrow  \H^0(C,L_1\otimes L_2)$$ 
is injective.

By assumption there exist non-zero sections $\sigma_i\in \H^0(C,L_i), i=3,\dots n$.  These produce (via cup-product) a non-zero section $$\sigma=\sigma_3 \cdots  \sigma_n \in \H^0(C,L_3\otimes \cdots \otimes L_n).$$    Since $\sigma\not = 0$ multiplication by $\sigma$ yields the following two injections 
$$\H^0(C,L_2)\rightarrow \H^0(C,L_2\otimes \cdots \otimes L_n)$$
and 
$$\H^0(C,L_1\otimes L_2)\rightarrow \H^0(C,K_C).$$ 
Using the above we produce (by cup-product) the commutative diagram 
$$\xymatrix{\H^0(C,L_1)\otimes \H^0(C,L_2)\ar[r]^-{\Phi} \ar[d]_-{\operatorname{id}\otimes \cdot \sigma} & \H^0(C,L_1\otimes L_2) \ar[d]^-{\cdot \sigma} \\ \H^0(C,L_1)\otimes \H^0(C,L_2\otimes \dots \otimes L_n)\ar[r]^-{\mu(L_1)} & \H^0(C,K_C)}.$$ \\
The vertical arrows of the above diagram are injective, as just noted, whereas bottom arrow of the diagram is injective by the Gieseker-Petri theorem.  Hence the top arrow $\Phi$ is injective. \endproof 
We now use Lemma \ref{n:petri} to prove Proposition \ref{aa}.
\begin{proof}[Proof of Proposition \ref{aa}]
Let $\eta:C\rightarrow {\PP}^{g-1}$ be the canonical map.  If $C$ is contained in the image of some Segre  embedding $\phi:{\PP}^{r_1}\times \dots \times {\PP}^{r_n}\rightarrow {\PP}^{g-1}$ then there exists a closed immersion $\psi: C\rightarrow {\PP}^{r_1}\times \dots \times {\PP}^{r_n}$ making the diagram 
$$ \xymatrix{C\ar[d]_-{\psi} \ar[r]^-{\eta} & {\PP}^{g-1} \\ {\PP}^{r_1}\times \dots \times {\PP}^{r_n}\ar[ur]_-{\phi}}$$ commute.

For every $1\leq i \leq n$, let $\pi_i$ denote the projection of ${\PP}^{r_1}\times \cdots \times {\PP}^{r_n}$ onto the $i$-th factor and set $L_i:=(\pi_i\circ \psi)^*\Osh_{{\PP}^{r_i}}(1)\cong\psi^*(\pi_i^* \Osh_{{\PP}^{r_i}}(1))$. 
We then obtain
$$K_C\cong\eta^*\Osh_{{\PP}^{g-1}}(1)\cong(\phi\circ \psi)^*\Osh_{{\PP}^{g-1}}(1)\cong L_1\otimes \cdots \otimes L_n.$$
Since the canonical image of $C$ is non-degenerate we conclude that $h^0(C,L_i)\geq r_i+1$ for $i=1,\dots, n$.

Finally, since $\eta$ is induced by the complete canonical series, we conclude that the cup-product $\bigotimes\limits_{i=1}^n \H^0(C,L_i)\rightarrow \H^0(C,K_C) $ is surjective.   By Lemma \ref{n:petri} the cup-product is injective and, by assumption, $\prod\limits_{i=1}^n (r_i+1)=g$. It follows that $h^0(C,L_i)=r_i+1$ for $i=1,\dots, n$ .

Conversely, given such $L_1,\dots, L_n$ we get regular maps $C\rightarrow{\PP}^{r_i}$ for $i=1,\dots, n.$  We thus can make a regular map $\eta:C\rightarrow {\PP}^{g-1}$ (induced by a (sub)-canonical series) by composition $\xymatrix{C\ar[r] & {\PP}^{r_1}\times \dots \times {\PP}^{r_n}\ar[r]^-{\textrm{Segre}} & {\PP}^{g-1}}.$  By Lemma \ref{n:petri} the cup-product \\ $\bigotimes\limits_{i=1}^n \H^0(C,L_i)\rightarrow \H^0(C,K_C)$ is injective. Since $\prod\limits_{i=1}^n(r_i+1)=g$ it is also surjective.  We thus conclude that the resulting map $\eta$ is given by the complete canonical series.  \end{proof}

\section{Proof of main Theorem and its corollary }

We first prove Corollary \ref{main:theorem} (a).  We then prove Theorem \ref{main:prop} from which we deduce Corollary \ref{main:theorem} (b).

\subsection*{The case $n=2$ and $g$ is composite}
When $n=2$ and $g$ is composite it is easy to prove that, in its canonical embedding, a general curve of genus $g$ lies on the image of some (non-trivial) Segre embedding  ${\PP}^{r_1}\times {\PP}^{r_2}\rightarrow  {\PP}^{g-1}.$ 

\begin{proof}[Proof of Corollary \ref{main:theorem} (a)]
Since $g$ is composite we can write $g=(r_1+1)(r_2+1)$ with $r_i\geq 1$.  Set $d_1=\frac{r_1g}{r_1+1}+r_1=r_1r_2+2r_1$.  Then $\rho(g,r_1,d_1)=0$ so, by Theorem \ref{theoremA} (a), $C$ admits a line bundle $L_1$ with at least $r_1+1$ global sections. On the other hand $\rho(g,r_1+1,d_1)<0$, so Theorem \ref{theoremB} (a) implies that $W^{r_1+1}_{d_1}=\varnothing$. Hence $L_1$ has exactly $r_1+1$ global sections. Set $L_2:=L_1^\vee \otimes K_C$.  Then, by the Riemann-Roch theorem, we obtain
$h^0(C,L_2)=g-d_1+r_1.$
Simplifying and using our expressions for $g$ and $d_1$ above we obtain \\
$h^0(C,L_2)=(r_1+1)(r_2+1)-r_1r_2-2r_1+r_1=r_2+1$.   The assertion now follows from Proposition \ref{aa}. 
\end{proof}

\subsection*{The case $n\geq 3$}

When $n\geq 3$ and $r_i\geq 1$, for $i=1,\dots, n$, the situation is in stark contrast to the case $n=2$.  Indeed we prove that there is no $g$ for which the canonical embedding of a general curve lies on the Segre embedding of any product of three or more projective spaces.  We deduce this result from Theorem \ref{main:prop} whose proof occupies the rest of this section.  The theorem follows from the following more general observation.

\begin{proposition}\label{aprop}
Let $C$ be a general curve of genus $g$.  Let $d$ be a non-negative integer and let $q:=\Big \lfloor \frac{d}{g} \Big \rfloor$.  If $C$ admits line bundles $L_1,\dots, L_n$ such that $r_i+1=h^0(C,L_i)\geq 2$ and $\deg \bigotimes\limits_{i=1}^n L_i=d$, then $n<2q+2$ and $g\geq\frac{1+\left(\sum\limits_{i=1}^n r_i \right)}{ -n + (q+1)+\left(\sum\limits_{i=1}^n \frac{1}{r_i+1}\right)}$.
\end{proposition}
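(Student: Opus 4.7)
The plan is straightforward: extract a lower bound on each $\deg L_i$ from the Brill--Noether theorem for general curves, sum those bounds, and compare the total against the upper bound on $d$ built into the definition of $q$.

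First I would observe that each $L_i$ with $h^0(C, L_i) = r_i+1$ defines a point of $W^{r_i}_{d_i}(C)$, where $d_i := \deg L_i$, so this scheme is non-empty. Since $C$ is general, Theorem \ref{theoremB}(a) forces $\rho(g, r_i, d_i) = g - (r_i+1)(g + r_i - d_i) \geq 0$; rearranging this Brill--Noether inequality yields the explicit lower bound
\[
d_i \;\geq\; g \cdot \frac{r_i}{r_i+1} + r_i \;=\; g - \frac{g}{r_i+1} + r_i,
\]
and summing over $i = 1, \dots, n$ gives a lower bound on $d = \sum d_i$ in terms of $g$, $n$, and the $r_i$. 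The complementary upper bound $d \leq (q+1)g - 1$ is free from the integrality of $d$ together with $q = \lfloor d/g \rfloor$.

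Setting these two inequalities against each other and collecting the $g$-terms on one side produces an inequality of the shape
\[
g \cdot \bigl[(q+1) - n + \textstyle\sum_i \tfrac{1}{r_i+1}\bigr] \;\geq\; 1 + \textstyle\sum_i r_i.
\]
Because the right-hand side is strictly positive and $g > 0$, the bracket is itself strictly positive; dividing through yields the claimed lower bound on $g$. Combining positivity of the bracket with the elementary estimate $\sum \frac{1}{r_i+1} \leq n/2$ (valid since each $r_i \geq 1$) then forces $(q+1) + n/2 > n$, i.e.\ $n < 2(q+1) = 2q+2$.

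I do not foresee any genuine obstacle. The only non-trivial input is the Brill--Noether inequality $\rho \geq 0$ for general curves (Theorem \ref{theoremB}(a)), imported directly from the preceding section; the rest is bookkeeping. The one place that requires a moment's care is the upgrade from the strict inequality $d < (q+1)g$ to $d \leq (q+1)g - 1$, which uses integrality of $d$ and is exactly what generates the ``$+1$'' in the numerator of the final bound.
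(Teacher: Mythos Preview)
Your proposal is correct and follows essentially the same argument as the paper: extract $d_i \geq g - \tfrac{g}{r_i+1} + r_i$ from Brill--Noether, sum, compare against the upper bound $d \leq (q+1)g - 1$, and rearrange. The paper phrases that upper bound via the remainder $d = qg + r$ with $r < g$ (so that $g - r \geq 1$), but this is the same inequality in different notation, and the remaining manipulations are identical.
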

\begin{proof}
Since $C$ is general, if such $L_i$ exist then, by Theorem \ref{theoremB} (a), $$ \rho(g,r_i,d_i)=g-(r_i+1)(g+r_i -d_i)\geq 0 \textrm{ for $i=1,\dots, n$}.$$
Solving for $d_i$ we conclude $d_i\geq g+r_i - \frac{g}{r_i+1} \textrm{ for } i = 1,\dots, n.$
Let $r$ denote the remainder obtained by dividing $d$ by $g$.  Then $0\leq r < g$ and 
$$d=(q+1)g+r-g=\sum\limits_{i=1}^n d_i \geq ng + \left(\sum\limits_{i=1}^n r_i\right)-\left(\sum_{i=1}^n \frac{g}{r_i+1}\right).$$  Rearranging we obtain
\begin{equation}\label{AA} \left(n-(q+1)-\left(\sum\limits_{i=1}^n \frac{1}{r_i+1}\right)\right)g \leq r-g-\left(\sum_{i=1}^n r_i\right)<0. \end{equation}  \\ Since $g>0$ we conclude 
\begin{equation}\label{BB}  n-(q+1)-\left(\sum\limits_{i=1}^n \frac{1}{r_i+1}\right) <0.\end{equation}  Now by assumption $r_i\geq 1$, for $i=1,\dots, n$.  Thus \begin{equation}\label{CC} \sum\limits_{i=1}^n \frac{1}{r_i+1} \leq \frac{n}{2}.\end{equation}  Using equations \eqref{CC} and \eqref{BB} we deduce that $n<2q+2$.  Finally, dividing equation \eqref{AA} by equation \eqref{BB} we obtain
\begin{equation}\label{DD} g \geq \frac{-r+g+\left(\sum\limits_{i=1}^n r_i \right)}{ -n + (q+1)+\left(\sum\limits_{i=1}^n \frac{1}{r_i+1}\right)} \geq \frac{1+\left(\sum\limits_{i=1}^n r_i \right)}{ -n + (q+1)+\left(\sum\limits_{i=1}^n \frac{1}{r_i+1}\right)}. \end{equation}  \end{proof}
 We now use Proposition \ref{aprop} to prove Theorem \ref{main:prop}.
 \begin{proof}[Proof of Theorem \ref{main:prop}]
 Set $d_i=\deg L_i$, $i=1,\dots, n$ and set $d=2g-2$.  Then $q:=\Big \lfloor \frac{d}{g} \Big \rfloor=1$ and the remainder $r$ equals $g-2$.  Since $r_i\geq 1$, for $i=1,\dots, n$, applying Proposition \ref{aprop} we conclude that $n<4$.  Since $n\geq 3$ we conclude that $n=3$.  Substituting $n=3$ and $r=g-2$ into equation \eqref{DD} we obtain $$g\geq \frac{2+\sum\limits_{i=1}^3 r_i}{-1 + \sum\limits_{i=1}^3 \frac{1}{r_i+1} }.$$
Rearranging we obtain
$$ g\geq \left( \prod\limits_{i=1}^3(r_i+1) \right) \left( \frac{r_1+r_2+r_3+2}{r_1+r_2+r_3+2-r_1r_2r_3}\right)>\prod\limits_{i=1}^3 (r_i+1).$$ \\
Since $1<\sum\limits_{i=1}^3 \frac{1}{r_i+1}$  we conclude, up to permutation of the $r_i$, that $r_1=r_2=1$, $r_3\geq 1$ or $r_1=1,r_2=2$ and $2\leq r_3\leq 4$.  Finally if $r_1=r_2=1$ and $r_3\geq 1$ then the condition $\prod\limits_{i=1}^3 (r_i+1)<g$ implies that $1\leq r_3\leq \frac{g}{4}-2$. \end{proof}
Having proved Theorem \ref{main:prop}, we now deduce Corollary \ref{main:theorem} (b).
\begin{proof}[Proof of Corollary \ref{main:theorem} (b).]  
Let $C$ be a general curve of genus $g$.  Suppose that $C$ lies on the image of some Segre embedding
${\PP}^{r_1}\times \dots \times {\PP}^{r_n}\rightarrow {\PP}^{g-1}.$  By Proposition \ref{aa}, $C$ admits line bundles $L_1,\dots, L_n$ such that $\bigotimes\limits_{i=1}^gL_i=K_C$, $h^0(C,L_i)=r_i+1$ and $\prod\limits_{i=1}^n(r_i+1)=g$. By Theorem \ref{main:prop}, $n=3$ and $g>\prod\limits_{i=1}^3 (r_i+1)$. This is a contradiction.  \end{proof}

%
% BibTeX 
 %\bibliographystyle{plain}
 %\bibliography{MyBib}
%
% Non-BibTeX

\end{document}